\begin{document}
\title{A short note on the tail bound of Wishart distribution}
\author{Shenghuo Zhu\\ \texttt{zsh@nec-labs.com}}
\maketitle

\begin{abstract}
  We study the tail bound of the emperical covariance of multivariate
  normal distribution. Following the work of \cite{gittens11:_tail},
  we provide a tail bound with a small constant.
\end{abstract}

\section{Main result}
Let $\{\xi_k: k=1\cdots n\}$ follow multivariate normal distribution
$\calN_d(0,C)$. The scatter matrix $S=\sum_{k=1}^n \xi_k
\trans{\xi_k}$ follows Wishart distribution, $\calW_d(n,C)$. The
estimate of $C$ is $\frac{1}{n} S$. The tail bound of $S$ has a wide
range of applications, such as, the sample estimation of random
projection. We follow the work of \cite{gittens11:_tail} to find the
tail bound with smaller constants.

\noindent\textbf{Notation:} Let denote the $\ell$-th largest
eigenvalue of matrix $X$ by $\lambda_\ell(X)$, the trace of $X$ by
  $\tr(X)$, and the spectral norm of $X$ by $\|X\|$.

\begin{thm}\label{thm:1}
  If $S$ follows a Wishart distribution $\calW_d(n,C)$, then for
  $\theta \geq 0$,
  \begin{align}
    \Pr\left\{\lambda_1(\frac{1}{n}S-C) \geq
      \left(\sqrt{\frac{2\theta (r+1)}{n}} + \frac{2 \theta
          r}{n}\right)\lambda_1(C)\right\}
    &\leq d\exp\{-\theta\}, \label{eq:15} \\
    \Pr\left\{\lambda_1(C-\frac{1}{n}S) \geq
      \left(\sqrt{\frac{2\theta (r+1)}{n}} + \frac{2 \theta
          r}{n}\right)
      \lambda_1(C)\right\} & \leq d\exp\{-\theta\}, \label{eq:16} \\
    \Pr\left\{\|\frac{1}{n}S-C\| \geq
      \left(\sqrt{\frac{2\theta(r+1)}{n}} + \frac{2 \theta r
        }{n}\right)\|C\|\right\} &\leq
    2d\exp\{-\theta\}, \label{eq:17} \\
    \Pr\left\{|\lambda_\ell(\frac{1}{n}S)-\lambda_\ell(C)| \geq
      \left(\sqrt{\frac{2\theta \kappa_\ell^2 (r+1)}{n}} + \frac{2
          \theta \kappa_\ell r}{n}\right)\lambda_\ell(C), \forall
      \ell\in\{1\cdots d\}\right\} &\leq
    2d\exp\{-\theta\}\label{eq:18},
  \end{align}
  where $r=\tr(C)/\|C\|$, and condition numbers
  $\kappa_\ell=\lambda_1(C)/\lambda_\ell(C)$.
\end{thm}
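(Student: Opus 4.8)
The plan is to prove the first two inequalities and deduce the rest. Since $\tfrac1nS-C$ is symmetric, $\|\tfrac1nS-C\|=\max\{\lambda_1(\tfrac1nS-C),\,\lambda_1(C-\tfrac1nS)\}$, so \eqref{eq:17} follows from \eqref{eq:15} and \eqref{eq:16} by a union bound. For \eqref{eq:18}, Weyl's inequality gives $|\lambda_\ell(\tfrac1nS)-\lambda_\ell(C)|\le\|\tfrac1nS-C\|$ simultaneously for all $\ell$, and since $\kappa_\ell\lambda_\ell(C)=\lambda_1(C)=\|C\|$, the event in \eqref{eq:18} is contained in the event in \eqref{eq:17}; hence \eqref{eq:18} follows from \eqref{eq:17}. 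So I concentrate on \eqref{eq:15}, with \eqref{eq:16} being the mirror image.

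For \eqref{eq:15} I would use the matrix Laplace transform method that underlies \cite{gittens11:_tail}. Writing $\tfrac1nS-C=-C+\sum_{k=1}^n\tfrac1n\xi_k\trans{\xi_k}$, Markov's inequality applied to $\lambda_1$ together with Lieb's concavity theorem give, for every $s>0$,
\[\Pr\{\lambda_1(\tfrac1nS-C)\ge t\}\le e^{-st}\,\mathbb{E}\tr\exp\!\bigl(s(\tfrac1nS-C)\bigr)\le e^{-st}\,\tr\exp\!\bigl(-sC+n\log\mathbb{E}\,e^{(s/n)\xi\trans{\xi}}\bigr).\]
The key point is that the rank-one matrix moment generating function is explicit: from $e^{\tau\xi\trans{\xi}}=I+\tfrac{e^{\tau\|\xi\|^2}-1}{\|\xi\|^2}\xi\trans{\xi}$ and the chi-square identity $\mathbb{E}\,e^{ag^2}=(1-2a)^{-1/2}$, one checks that $\mathbb{E}\,e^{\tau\xi\trans{\xi}}$ is diagonal in the eigenbasis of $C$ with $\ell$-th eigenvalue
\[G_\ell(\tau)=1+\lambda_\ell(C)\int_0^\tau\frac{\det(I-2uC)^{-1/2}}{1-2u\lambda_\ell(C)}\,du,\qquad 0\le\tau<\tfrac1{2\lambda_1(C)}.\]
Thus the bound becomes $e^{-st}\sum_\ell\exp(n\log G_\ell(s/n)-s\lambda_\ell(C))\le d\,\exp\bigl(-st+\max_\ell(n\log G_\ell(s/n)-s\lambda_\ell(C))\bigr)$, and the whole problem is reduced to a scalar estimate for $\log G_\ell(\tau)-\lambda_\ell(C)\tau$.

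That estimate is the core of the argument. Using $\log(1+x)\le x$ and the elementary determinant bound $\det(I-2uC)^{-1/2}\le(1-2u\tr(C))^{-1/2}$ (equivalently $\prod_\ell(1-2u\lambda_\ell(C))\ge 1-2u\tr(C)$), one controls the integral and obtains a Bernstein-type bound of the form $\log G_\ell(\tau)-\lambda_\ell(C)\tau\le\frac{\tau^2/2}{1-\tr(C)\tau}\,\lambda_\ell(C)\bigl(\tr(C)+\lambda_\ell(C)\bigr)$, and then $\lambda_\ell(C)(\tr(C)+\lambda_\ell(C))\le(r+1)\lambda_1(C)^2$ uniformly in $\ell$. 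Substituting and minimizing $-st+\frac{s^2(r+1)\lambda_1(C)^2/(2n)}{1-\tr(C)s/n}$ over $s$ is a Bernstein/Laurent–Massart computation: with variance proxy $v=(r+1)\lambda_1(C)^2/n$ and scale $b=\tr(C)/n=r\lambda_1(C)/n$, the value of $t$ making the exponent equal $-\theta$ is at most $\sqrt{2v\theta}+2b\theta=\bigl(\sqrt{2\theta(r+1)/n}+2\theta r/n\bigr)\lambda_1(C)$, which gives \eqref{eq:15}. Inequality \eqref{eq:16} is treated identically from $\mathbb{E}\,e^{-\tau\xi\trans{\xi}}$, whose eigenvalues $1-\lambda_\ell(C)\int_0^\tau\det(I+2uC)^{-1/2}(1+2u\lambda_\ell(C))^{-1}\,du$ are defined for all $\tau\ge 0$; the lower tail is lighter (essentially sub-Gaussian) and I would simply weaken it so the threshold matches \eqref{eq:15}, permitting the single-threshold statement \eqref{eq:17}.

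The main obstacle is the scalar moment generating function estimate together with its optimization. One must verify that the sub-exponential correction factor $1/(1-\tr(C)\tau)$ genuinely dominates all the higher-order terms in $\log G_\ell(\tau)-\lambda_\ell(C)\tau$; and because this bound degrades near the boundary of its domain of validity (where $G_\ell$ itself blows up), the Laplace-transform minimization cannot be delegated to a black-box Bernstein inequality — the optimal $s$ must be produced explicitly and checked to lie in the admissible interval, exactly as in Laurent–Massart's analysis of $\chi^2$ tails, to which the whole bound specializes when $C$ has rank one. Obtaining the sharp constant $2$ (rather than a cruder $4$) in the linear term $2\theta r/n$ is precisely what this careful step buys, and reconciling the two ranges $\tau<\tfrac1{2\lambda_1(C)}$ and $\tau<\tfrac1{2\tr(C)}$ (which differ when $r$ is close to $1$) is the place where I expect the bookkeeping to be most delicate.
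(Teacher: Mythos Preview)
Your overall plan matches the paper's: prove \eqref{eq:15} and \eqref{eq:16} via the matrix Laplace transform, deduce \eqref{eq:17} by a union bound, and deduce \eqref{eq:18} from Weyl's inequality together with $\kappa_\ell\lambda_\ell(C)=\lambda_1(C)$. Where you diverge is in how the log-moment generating function of $\xi\trans{\xi}$ is controlled. The paper establishes the Bernstein moment condition $\expect(\xi\trans{\xi}-C)^p\preceq\tfrac{p!}{2}(2\tr C)^{p-2}\bigl(\tr(C)C+C^2\bigr)$ directly, by a combinatorial computation of $\expect(\xi\trans{\xi})^p$ via Isserlis' theorem (Lemma~\ref{lem:3}), and then feeds this into Theorem~\ref{thm:m} and Lemma~\ref{lem:1}. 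You instead compute $\expect e^{\tau\xi\trans{\xi}}$ in closed form through the rank-one exponential identity. Your integral formula for $G_\ell(\tau)$ is correct, and expanding it to second order recovers exactly the paper's $\Sigma_2=\tr(C)C+C^2$, so the route is sound.

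There is, however, a concrete slip in the scale constant. The inequality you state,
\[
\log G_\ell(\tau)-\lambda_\ell(C)\tau\le\frac{\tau^2/2}{1-\tr(C)\tau}\,\lambda_\ell(C)\bigl(\tr C+\lambda_\ell(C)\bigr),
\]
is false already for $d=1$, $C=1$, where it reads $-\tfrac12\log(1-2\tau)-\tau\le \tau^2/(1-\tau)$; comparing cubic terms gives $\tfrac43\tau^3>\tau^3$. The correct denominator is $1-2\tr(C)\tau$, i.e.\ $B=2\tr C$, which is precisely the $B$ that emerges from the paper's moment argument. With $B=2\tr C$, Lemma~\ref{lem:1} yields the threshold $\sqrt{2v\theta}+B\theta/n=\bigl(\sqrt{2\theta(r+1)/n}+2\theta r/n\bigr)\lambda_1(C)$ directly, so the unexplained extra factor you inserted in ``$\sqrt{2v\theta}+2b\theta$'' is a symptom of the same off-by-two. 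Your own determinant bound already forces $u<1/(2\tr C)$, so this scale is intrinsic to your route anyway. Once corrected, both proofs deliver the same constants; the paper's moment calculation treats $\pm(\xi\trans{\xi}-C)$ uniformly, while your MGF computation is arguably more transparent for the upper tail but, as you note, needs a separate (easier) argument for $\expect e^{-\tau\xi\trans{\xi}}$.
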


\begin{remark}
  When $d=1$ and $C=1$, then $r=1$, and it is exactly the upper bound
  of chi-square distribtuion provided in
  in~\cite{laurent00:_adapt}.
\end{remark}

\begin{remark} Applying the modification in this note to Theorem 7.1
  of \cite{gittens11:_tail}, we have
  \begin{align}
    \Pr\left\{\lambda_\ell(\frac{1}{n}S) \geq
      \left(1+\sqrt{\frac{2\theta (\kappa_\ell r_\ell+2)}{n}} +
        \frac{2 \theta \kappa_\ell r_\ell}{n}
      \right)\lambda_\ell(C)\right\} &\leq (d-\ell+1)
    \exp\{-\theta\}, \text{~for $\ell=1\cdots d$}, \label{eq:19}\\
    \Pr\left\{\lambda_\ell(\frac{1}{n}S) \leq
      \left(1-\sqrt{\frac{2\theta \kappa_\ell^2
            (r_1-r_{\ell+1}+2)}{n}}\right)\lambda_\ell(C)\right\}
    &\leq \ell \exp\{-\theta\}, \text{~for $\ell=1\cdots
      d$},\label{eq:20}
  \end{align}
  where $r_\ell=\sum_{i=\ell}^d \lambda_i(C)/\lambda_1(C)$. As
  $r_\ell$ is smaller than $r$, it is tighter
  individually. Eq.~(\ref{eq:19}) and (\ref{eq:20}) are individual
  eigenvalue bounds, but Eq.~(\ref{eq:18}) is the collective
  eigenvalue bound. When $\ell=1$, $\kappa_1=1$ and $r_\ell=r$, then
  the upper bound of the top eigenvalue of Eq.~(\ref{eq:19}) is
  slightly looser than that of Eq.~(\ref{eq:15}).
\end{remark}

\section{Proof}
We use part of the proof of Lemma 8 in \cite{birg98:_minim}.
\begin{lem}\label{lem:1}
  Let $B>0$ and $\sigma>0$.  If the log-moment generating function
  satisfies
  \begin{align*}
    \log \expect \exp\{u Z\} &\leq \frac{\sigma^2 u^2}{2 (1-uB)}
    \quad \text{for all $0 \leq u < 1/B$}, 
  \end{align*}
  then
  \begin{equation}
    \label{eq:1}
    \Pr\{ Z \geq \epsilon\} \leq
    \exp\{-\frac{\epsilon^2}{2 \sigma^2 + 2 \epsilon B}\}
    \quad\text{for all $\epsilon \geq 0$},
  \end{equation}
  and
  \begin{equation}
    \label{eq:2}
    \Pr\{ Z \geq \sqrt{2\theta \sigma^2} + \theta B\} \leq
    \exp\{-\theta\}
    \quad \text{for all $\theta \geq 0$}.
  \end{equation}
\end{lem}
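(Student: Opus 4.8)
The plan is the Chernoff--Cram\'er method. For every $u$ with $0\le u<1/B$, Markov's inequality applied to the nonnegative variable $e^{uZ}$, together with the hypothesis, gives
\[
  \Pr\{Z\ge\epsilon\}=\Pr\{e^{uZ}\ge e^{u\epsilon}\}\le e^{-u\epsilon}\,\expect e^{uZ}\le\exp\Bigl\{-u\epsilon+\frac{\sigma^2u^2}{2(1-uB)}\Bigr\}.
\]
From here each of the two assertions follows by substituting one well-chosen admissible value of $u$ and checking that the exponent is at most the claimed bound. The only ingredient that is not bookkeeping is the choice of $u$; after that each verification is a single algebraic substitution, so I do not expect a real obstacle.

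For \eqref{eq:1} I would take $u=\dfrac{\epsilon}{\sigma^2+\epsilon B}$, which lies in $[0,1/B)$ for every $\epsilon\ge0$ and makes $1-uB=\dfrac{\sigma^2}{\sigma^2+\epsilon B}$. Substituting, the linear term contributes $-\dfrac{\epsilon^2}{\sigma^2+\epsilon B}$ and the quadratic term contributes $+\dfrac{\epsilon^2}{2(\sigma^2+\epsilon B)}$, whose sum is $-\dfrac{\epsilon^2}{2\sigma^2+2\epsilon B}$, exactly the exponent in \eqref{eq:1} (the case $\epsilon=0$ being trivial).

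For \eqref{eq:2} I would \emph{not} try to read it off from \eqref{eq:1}: forcing $\frac{\epsilon^2}{2\sigma^2+2\epsilon B}\ge\theta$ and solving the resulting quadratic, together with $\sqrt{a+b}\le\sqrt a+\sqrt b$, only yields $\Pr\{Z\ge\sqrt{2\theta\sigma^2}+2\theta B\}\le e^{-\theta}$, with the coefficient of $B$ off by a factor of two. Instead I would return to the Chernoff bound and set $\epsilon=\sqrt{2\theta\sigma^2}+\theta B$ with the sharper choice $u=\dfrac{\sqrt{2\theta}}{\sigma+\sqrt{2\theta}\,B}\in[0,1/B)$, for which $1-uB=\dfrac{\sigma}{\sigma+\sqrt{2\theta}\,B}$. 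A short computation gives $u\epsilon=\dfrac{\theta(2\sigma+\sqrt{2\theta}\,B)}{\sigma+\sqrt{2\theta}\,B}$ and $\dfrac{\sigma^2u^2}{2(1-uB)}=\dfrac{\theta\sigma}{\sigma+\sqrt{2\theta}\,B}$, so the exponent equals exactly $-\theta$ (again $\theta=0$ is trivial). Equivalently, this amounts to checking that the Legendre transform of $u\mapsto\sigma^2u^2/\bigl(2(1-uB)\bigr)$ at the point $\sqrt{2\theta\sigma^2}+\theta B$ is at least $\theta$, which is the cleanest way to see that \eqref{eq:2} is sharp in $B$ for this method.
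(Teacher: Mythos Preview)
Your proof is correct and follows the same Chernoff--Cram\'er strategy as the paper, but the execution differs in a small, instructive way. The paper computes the Legendre transform $h(\epsilon)=\sup_{u}\bigl(u\epsilon-\frac{\sigma^{2}u^{2}}{2(1-uB)}\bigr)$ exactly, locating the optimizer $u=B^{-1}\bigl[1-\sigma(2\epsilon B+\sigma^{2})^{-1/2}\bigr]$, and then bounds the resulting closed form below to obtain \eqref{eq:1}; for \eqref{eq:2} it reparametrizes by setting $\theta=\frac{\sigma^{2}u^{2}}{2(1-uB)^{2}}$ and checks that the optimal $\epsilon$ equals $\sqrt{2\theta\sigma^{2}}+\theta B$. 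You instead bypass the optimization entirely: for \eqref{eq:1} you plug in the convenient (sub-optimal) value $u=\epsilon/(\sigma^{2}+\epsilon B)$, which lands exactly on the stated bound; for \eqref{eq:2} your choice $u=\sqrt{2\theta}/(\sigma+\sqrt{2\theta}\,B)$ is in fact the paper's optimizer specialized to $\epsilon=\sqrt{2\theta\sigma^{2}}+\theta B$, so the two arguments coincide there. Your route is a bit shorter and avoids the square-root algebra in the exact $h(\epsilon)$; the paper's route makes explicit that \eqref{eq:2} is the \emph{exact} Legendre transform rather than merely an upper bound, which is the point behind your closing remark that \eqref{eq:2} is sharp in $B$ for this method.
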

\begin{proof} It follows Markov's inequality that
  \begin{align*}
    \Pr\{ Z \geq \epsilon\} & \leq \inf_u \expect \exp\{ -u \epsilon +
    u Z\} = \exp\{-h(\epsilon)\},
  \end{align*}
  where $h(\epsilon):=\sup_{u} u\epsilon - \frac{\sigma^2 u^2}{2
    (1-u B)}$.  Also, the supremum is achieved for
  \begin{equation*}
    \epsilon= \frac{\sigma^2u}{1-u B}+
    \frac{\sigma^2u^2 B}{2(1-u B)^2}=\frac{\sigma^2u}{2(1-uB)}+
    \frac{\sigma^2u}{2(1-u B)^2},
  \end{equation*}
  i.e. $u=B^{-1}[1-\sigma(2\epsilon B + \sigma^2)^{-1/2}] <
  1/B$. Then we prove Eq.~(\ref{eq:1}), as
  \begin{equation*}
    h(\epsilon)=\frac{\epsilon^2}{\epsilon B
      +\sigma^2+\sigma^2(1+2\epsilon
      B/\sigma^2)^{1/2}} \geq \frac{\epsilon^2}{2 \epsilon B + 2\sigma^2}.
  \end{equation*}
  Let
  \begin{equation*}
    \theta:=\frac{\sigma^2u^2}{2(1-u B)^2}
    =h(\epsilon).
  \end{equation*}
   Then we prove Eq.~(\ref{eq:2}), as
   \begin{align*}
    \sqrt{2\theta \sigma^2}+\theta B = \frac{\sigma^2u}{(1-u B)}+
    \frac{\sigma^2u^2 B}{2(1-u B)^2} = \epsilon.
  \end{align*}
\end{proof}

The following Theorem is Theorem 6.2 in~\cite{tropp:_user}, except for
using Lemma~\ref{lem:1} to achieve a different formula.
\begin{thm}\label{thm:m}
  If a finite sequence $\{X_k: k=1\cdots n\}$ of independent, random,
  self-adjoint matrices with dimension $d$, all of which satisfy the
  Bernstein's moment condition, i.e.
  \begin{equation*}
    \expect X_k^p \preceq \frac{p!}{2} B^{p-2} \Sigma_2, \text{~for~}
    p\geq 2,
  \end{equation*}
  where $B$ is a positive constant and $\Sigma_2$ is a positive
  semi-definite matrix, then, 
  \begin{align*}
    \log \expect \exp (u X_k) & \preceq u \expect X_k +
    \frac{u^2}{2(1- u B)} \Sigma_2 \quad \text{for all $0\leq u < 1/B$,}\\
    \Pr\{ \lambda_1(\sum_k X_k) &\geq \lambda_1(\sum_k \expect X_k)+
    \sqrt{2{n}\theta \lambda_1(\Sigma_2) } + \theta B \} \leq d
    \exp\{-\theta\}.
  \end{align*}
  Additionally, if $X_k$ are positive semi-definite matrices,
  \begin{align*}
    \log \expect \exp (-u X_k) & \preceq -u \expect X_k +
    \frac{u^2}{2} \Sigma_2 \quad
    \text{for all $u\geq 0$,} \\
    \Pr\{ \lambda_d(\sum_k X_k) &\leq \lambda_d(\sum_k \expect X_k)-
    \sqrt{2\theta n\lambda_1(\Sigma_2)} \} \leq d \exp\{-\theta\}.
  \end{align*}
\end{thm}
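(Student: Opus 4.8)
The plan is to bound the matrix log-moment generating function of each $X_k$ in the semidefinite order, sum those bounds, push the sum through the Ahlswede--Winter--Lieb trace-exponential master inequality that Tropp uses, and then convert the resulting scalar Chernoff integral into the stated form by invoking Lemma~\ref{lem:1}. For the log-mgf bound I would expand $\expect\exp(uX_k)=I+u\expect X_k+\sum_{p\ge2}\frac{u^p}{p!}\expect X_k^p$ and apply the Bernstein moment hypothesis term by term: since $\expect X_k^p\preceq\frac{p!}{2}B^{p-2}\Sigma_2$, summing the geometric series $\sum_{p\ge2}(uB)^{p-2}=(1-uB)^{-1}$ (valid for $0\le u<1/B$) gives $\expect\exp(uX_k)\preceq I+u\expect X_k+\frac{u^2}{2(1-uB)}\Sigma_2$. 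Writing $M:=\expect\exp(uX_k)-I$, which satisfies $M\succ-I$ because $\expect\exp(uX_k)\succ0$, the scalar estimate $\log(1+t)\le t$ transfers through the functional calculus of the single self-adjoint matrix $M$ to $\log(I+M)\preceq M$; chaining this with the previous inequality yields the first displayed claim, $\log\expect\exp(uX_k)\preceq u\expect X_k+\frac{u^2}{2(1-uB)}\Sigma_2$.

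Next, for the upper tail I would invoke the master tail bound (Tropp, Theorem~6.2, which rests on Lieb's concavity theorem): for $u>0$,
\[
  \Pr\{\lambda_1(\textstyle\sum_k X_k)\ge t\}\;\le\;\inf_{0\le u<1/B}e^{-ut}\,\tr\exp\Bigl(\textstyle\sum_k\log\expect\exp(uX_k)\Bigr).
\]
Substituting the log-mgf bound, bounding $\tr\exp(A)\le d\exp(\lambda_1(A))$, and using Weyl's inequality $\lambda_1(A+B)\le\lambda_1(A)+\lambda_1(B)$ to split the $\expect X_k$ term from the $\Sigma_2$ term (whose coefficient $\frac{nu^2}{2(1-uB)}$ is nonnegative), then setting $t=\lambda_1(\sum_k\expect X_k)+\epsilon$, the bound collapses to $d\inf_{0\le u<1/B}\exp\!\bigl(-u\epsilon+\frac{n\lambda_1(\Sigma_2)u^2}{2(1-uB)}\bigr)$. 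The exponent here is precisely $-h(\epsilon)$ from the proof of Lemma~\ref{lem:1} with $\sigma^2=n\lambda_1(\Sigma_2)$ and the same $B$, so choosing $\epsilon=\sqrt{2\theta\sigma^2}+\theta B$ makes it $d\exp(-\theta)$, the stated upper-tail bound.

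Finally, for the lower tail (with each $X_k\succeq0$) I would run the same argument on $-X_k$. The scalar inequality $e^{-y}\le1-y+\frac12y^2$ on $y\ge0$ transfers, via the functional calculus of $uX_k\succeq0$, to $\exp(-uX_k)\preceq I-uX_k+\frac{u^2}{2}X_k^2$; taking expectations, using only the $p=2$ instance $\expect X_k^2\preceq\Sigma_2$, and applying $\log(I+M')\preceq M'$ to $M':=\expect\exp(-uX_k)-I$ yields $\log\expect\exp(-uX_k)\preceq-u\expect X_k+\frac{u^2}{2}\Sigma_2$. Since the event $\lambda_d(\sum_k X_k)\le t$ equals $\lambda_1(\sum_k(-X_k))\ge-t$, the master bound together with the same trace/Weyl reductions and $t=\lambda_d(\sum_k\expect X_k)-\epsilon$ gives $d\inf_{u\ge0}\exp\!\bigl(-u\epsilon+\frac{n\lambda_1(\Sigma_2)}{2}u^2\bigr)$; minimizing at $u=\epsilon/(n\lambda_1(\Sigma_2))$ and setting $\epsilon=\sqrt{2\theta n\lambda_1(\Sigma_2)}$ produces $d\exp(-\theta)$, which is the $B=0$ specialization of Lemma~\ref{lem:1}.

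The hard part will be the bookkeeping forced by noncommutativity: $\expect X_k$, $\Sigma_2$, and the $X_k$ need not be simultaneously diagonalizable, so every ``scalar-looking'' step must be legitimized as an operator inequality --- the term-by-term geometric summation in the semidefinite order, the transfer rule and operator monotonicity of the matrix logarithm, Lieb's concavity theorem behind the trace-exponential inequality, and Weyl's inequality for extracting $\lambda_1$ of a sum. Once those are secured, the reduction to Lemma~\ref{lem:1} is essentially automatic, since the scalar optimization that lemma performs is exactly the one that arises.
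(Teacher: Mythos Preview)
Your argument is correct and matches the paper's proof essentially step for step: expand the matrix mgf, apply the Bernstein moment hypothesis to bound the tail of the series, use $\log(I+M)\preceq M$ to pass to the log-mgf, feed the resulting semidefinite bound into Tropp's master Laplace-transform inequality (the paper cites Theorem~3.6 of \cite{tropp:_user} for this; Theorem~6.2 there is the Bernstein tail bound itself, not the trace-exponential tool), and finish with the scalar optimization of Lemma~\ref{lem:1}. The only differences are cosmetic: you spell out the Weyl and functional-calculus justifications that the paper leaves implicit, and for the lower tail you go straight from $\log(I+M')\preceq M'$ whereas the paper first invokes operator monotonicity of $\log$ and then applies the same inequality.
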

\begin{proof}
  \begin{align*}
    \log \expect \exp (u X_k) & =
    \log(I + u \expect X_k + \sum_{p=2}^\infty \frac{u^p}{p!} \expect X_k^p) \\
    & \preceq u \expect X_k + \sum_{p=2}^\infty \frac{u^2 (u
      B)^{p-2}}{2}
    \Sigma_2 \\
    &= u \expect X_k + \frac{u^2}{2(1-uB)} \Sigma_2.
  \end{align*}
  It follows Theorem 3.6 in \cite{tropp:_user} and Lemma \ref{lem:1}
  that
  \begin{align*}
    \Pr\{\lambda_1(\sum_k X_k) \geq
    \lambda_1(\sum_k \expect X_k)+ \epsilon \} & \leq
    \inf_{u \geq 0} \left\{ \exp( -u \lambda_1(\sum_k
      \expect X_k) -u \epsilon) \tr\exp(\sum_k \log \expect \exp(u
      X_k))\right\} \\
    & \leq \inf_{u \geq 0} \left\{ d \exp(-u \epsilon + \frac{n
        u^2}{2(1-uB)} \lambda_1(\Sigma_2))\right\} \\
    & \leq d \exp(-\theta),
  \end{align*}
  where $\epsilon= \sqrt{2n\theta \lambda_1(\Sigma_2)}
  + \theta B $.
  \begin{align*}
    \log \expect \exp (-u X_k) & \preceq
    \log(I - u \expect X_k + \frac{u^2}{2} \expect X_k^2) \\
    & \preceq -u \expect X_k + \frac{u^2}{2} \Sigma_2 ,
  \end{align*}
  then
  \begin{align*}
    \Pr\{\lambda_d(\sum_k X_k) \leq \lambda_d(\sum_k \expect X_k)-
    \epsilon \} & \leq \inf_{u \geq 0} \left\{ \exp(u
      \lambda_d(\sum_k \expect X_k) -u \epsilon) \tr\exp(\sum_k
      \log \expect \exp(-u
      X_k))\right\} \\
    & \leq \inf_{u \geq 0} \left\{ d \exp(-u \epsilon + \frac{n
        u^2}{2} \lambda_1(\Sigma_2))\right\} \\
    & \leq d \exp(-\theta),
  \end{align*}
  where $\epsilon=\sqrt{2\theta n \lambda_1(\Sigma_2)}$.
\end{proof}

Then we prove the Bernstein's moment condition for $\xi\trans{\xi}$
and $\xi\trans{\xi}-C$.
\begin{lem}\label{lem:3}
  Let $\xi$ be random vectors from $\calN_d(0,C)$. For $p\geq 2$,
  \begin{align*}
    \expect (\xi\trans{\xi})^p \preceq \frac{p!}{2}
    B^{p-2}(\tr(C)C+2C^2), \\
    \expect (\xi\trans{\xi}-C)^p \preceq \frac{p!}{2}
    B^{p-2} \Sigma_2, \\
    \expect (C-\xi\trans{\xi})^p \preceq \frac{p!}{2} B^{p-2}
    \Sigma_2,
  \end{align*}
  where $\Sigma_2=\tr(C)C+C^2$ and $B= 2\tr(C)$.
\end{lem}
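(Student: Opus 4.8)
The plan is to diagonalize $C$, reduce all three inequalities to scalar statements about diagonal entries, and treat $\xi\trans{\xi}$ and the two centered matrices by a common generating‑function argument. Fix an orthonormal eigenbasis $u_1,\dots,u_d$ of $C$ with $Cu_i=\lambda_iu_i$, $\lambda_1\ge\dots\ge\lambda_d\ge0$, and write $\xi=\sum_i\sqrt{\lambda_i}\,Z_iu_i$ with $Z_1,\dots,Z_d$ i.i.d.\ $\calN(0,1)$. Flipping $Z_a\mapsto-Z_a$ leaves the joint law of $(Z_i)$ and the scalar $\|\xi\|^2$ unchanged while replacing $\xi$ by $P_a\xi$ with the reflection $P_a=I-2u_a\trans{u_a}$, hence replaces $\xi\trans{\xi}$ by $P_a(\xi\trans{\xi})P_a$ and $\xi\trans{\xi}-C$ by $P_a(\xi\trans{\xi}-C)P_a$. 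Therefore $\expect(\xi\trans{\xi})^p$, $\expect(\xi\trans{\xi}-C)^p$ and $\expect(C-\xi\trans{\xi})^p$ are each invariant under conjugation by every $P_a$, hence diagonal in $u_1,\dots,u_d$; since $\tr(C)C+2C^2$ and $\Sigma_2$ are diagonal there too (entries $\lambda_a(\tr C+2\lambda_a)$ and $\lambda_a(\tr C+\lambda_a)$), each matrix inequality reduces to $d$ scalar inequalities on diagonal entries, the cases $\lambda_a=0$ being trivial.

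For $\expect(\xi\trans{\xi})^p$: since $(\xi\trans{\xi})^p=\|\xi\|^{2(p-1)}\,\xi\trans{\xi}$, the $(a,a)$ entry is $\lambda_a\expect[W^{p-1}Z_a^2]$ with $W=\|\xi\|^2=\sum_i\lambda_iZ_i^2$, so one must bound $\expect[W^{p-1}Z_a^2]\le\tfrac{p!}{2}(2\tr C)^{p-2}(\tr C+2\lambda_a)$. Splitting off the $Z_a$‑term and using $\expect e^{sZ^2}=(1-2s)^{-1/2}$, $\expect[e^{sZ^2}Z^2]=(1-2s)^{-3/2}$ gives $\expect[e^{tW}Z_a^2]=(1-2t\lambda_a)^{-1}\det(I-2tC)^{-1/2}$, whose logarithm $\sum_{m\ge1}\tfrac{(2t)^m}{2m}(\tr(C^m)+2\lambda_a^m)$ is a power series with nonnegative coefficients dominated, via $\tr(C^m)\le(\tr C)^m$ and $\lambda_a^m\le\lambda_a(\tr C)^{m-1}$, by those of $-x\log(1-2t\tr C)$ with $x=\tfrac12+\lambda_a/\tr C\le\tfrac32$. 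Exponentiating (which preserves coefficientwise domination of power series with nonnegative coefficients) gives $\expect[e^{tW}Z_a^2]\le(1-2t\tr C)^{-x}$ coefficientwise, hence $\expect[W^{p-1}Z_a^2]\le(p-1)!\binom{x+p-2}{p-1}(2\tr C)^{p-1}$. Writing the target as $\tfrac{p!}{2}x(2\tr C)^{p-1}$ reduces the claim to $\binom{x+p-2}{p-1}\le\tfrac{p}{2}x$, i.e.\ (cancelling one $x$) to $\prod_{j=1}^{p-2}(x+j)\le\tfrac{p!}{2}=\prod_{j=1}^{p-2}(j+2)$, which holds term by term since $x<2$.

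For the centered statements the same reduction applies, and the diagonal computation below gives $\expect(\xi\trans{\xi}-C)^2=\Sigma_2$ exactly, so $p=2$ holds with equality. For $p\ge3$ write $N=\xi\trans{\xi}-C$: being a rank‑one positive update of $-C$, $N$ has at most one positive eigenvalue, so its orthogonal positive/negative parts $N=N_++N_-$ satisfy $N_+$ rank $\le1$, $\|N_-\|\le\|C\|$, and $\pm N^p\preceq N_+^p+|N_-|^p\preceq N_+^p+\|C\|^{p-2}N^2$; moreover $N\preceq\xi\trans{\xi}$ forces $\lambda_1(N)\le\|\xi\|^2$, so $N_+^p\preceq\|\xi\|^{2(p-2)}N_+^2\preceq\|\xi\|^{2(p-2)}N^2$. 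Hence every diagonal entry of $\pm\expect N^p$ is at most $\expect[W^{p-2}\|Nu_a\|^2]+\|C\|^{p-2}\lambda_a(\tr C+\lambda_a)$, where $\|Nu_a\|^2=\lambda_a^2(Z_a^2-1)^2+\lambda_aZ_a^2\sum_{i\ne a}\lambda_iZ_i^2$ (so $\expect\|Nu_a\|^2=2\lambda_a^2+\lambda_a(\tr C-\lambda_a)=\lambda_a(\tr C+\lambda_a)$, the $p=2$ case); the second term is $\le2^{-(p-2)}(2\tr C)^{p-2}\lambda_a(\tr C+\lambda_a)$ and is harmless. The remaining quantity again has an explicit generating function, $\expect[e^{tW}\|Nu_a\|^2]=\det(I-2tC)^{-1/2}\bigl[\lambda_a^2(2+4t^2\lambda_a^2)(1-2t\lambda_a)^{-2}+\lambda_a(1-2t\lambda_a)^{-1}\sum_{i\ne a}\lambda_i(1-2t\lambda_i)^{-1}\bigr]$, and one bounds $(p-2)!\,[t^{p-2}]$ of it by the coefficient‑domination technique used above.

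The hard part is this last estimate. The cross term $-2\lambda_a(\lambda_aZ_a^2)$ buried in $\|Nu_a\|^2$ is exactly the cancellation responsible for the centered bound carrying $\Sigma_2=\tr(C)C+C^2$ rather than the larger $\tr(C)C+2C^2$ of the uncentered case; any coarsening that discards it — e.g.\ $\|Nu_a\|^2\le\lambda_aZ_a^2\|\xi\|^2+\lambda_a^2$, which merely reproduces the $\xi\trans{\xi}$ computation — overshoots. One therefore has to carry $\|Nu_a\|^2$ in the form above (equivalently, use the positive association $\expect[W^{p-2}Z_a^2]\ge\expect[W^{p-2}]$ to keep the subtraction), and the coefficient bookkeeping that results is tight in the rank‑one case for $p=3$, leaving essentially no slack — that is where the real work lies.
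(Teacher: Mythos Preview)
Your argument for the first inequality is complete and correct, and it takes a genuinely different route from the paper: instead of expanding via Isserlis' theorem and counting chain/loop terms, you diagonalize, reduce to the scalar moment $\expect[W^{p-1}Z_a^{2}]$, and bound it by coefficientwise domination of the generating function $\expect[e^{tW}Z_a^{2}]$ by $(1-2t\tr C)^{-x}$. This is cleaner than the paper's combinatorics and handles every $p\ge2$ uniformly rather than case by case.

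For the two centered inequalities, however, your proof is not complete. You set up a sound framework --- the spectral splitting $N=N_{+}+N_{-}$ with $N_{+}$ of rank at most one, the pointwise bound $\pm N^{p}\preceq(W^{p-2}+\|C\|^{p-2})N^{2}$, and the explicit generating function for $\expect[e^{tW}\|Nu_a\|^{2}]$ --- but you stop short of actually bounding $(p-2)!\,[t^{p-2}]$ of that generating function by $\bigl(\tfrac{p!}{2}-2^{-(p-2)}\bigr)B^{p-2}\lambda_a(\tr C+\lambda_a)$. Your final paragraph openly flags this as ``where the real work lies,'' and your own observation that the rank-one $p=3$ case is exactly tight (indeed $10\lambda^{3}+2\lambda^{3}=12\lambda^{3}$ on the nose) shows that a naive coefficient-domination step that simply replaces every $\lambda_i$ by $\tr C$ has no room to spare there; the $-2\lambda_a^{2}Z_a^{2}$ cancellation must be tracked, and you have not shown how to do this for general $p$.

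The paper sidesteps this difficulty by a different organization: it uses Isserlis' theorem to write $\expect(X-C)^{p}$ as a signed sum of products $\tr(C^{l_1})\cdots\tr(C^{l_k})C^{c}$, verifies $p=2,3,4$ by explicit term-counting, and for $p\ge5$ abandons the cancellation entirely, bounding $\Sigma_p\preceq\expect X^{p}+C^{p}\preceq\tfrac{(2p-1)!!+1}{2}\tr(C)^{p-2}\Sigma_2$ and then checking numerically that $\tfrac{(2p-1)!!+1}{p!\,2^{p-1}}\le0.1232<\tfrac12$ for $p\ge5$. Your framework could plausibly be completed the same way --- verify $p=3,4$ by direct computation of $\expect[W^{p-2}\|Nu_a\|^{2}]$ (both do go through, with slack at $p=4$), and for $p\ge5$ discard the cancellation via $\|Nu_a\|^{2}\le\lambda_aZ_a^{2}W+\lambda_a^{2}$, which reduces to your already-established uncentered bound plus a lower-order term --- but as written the centered cases remain a genuine gap.
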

\begin{proof}
  Let $X=\xi\trans{\xi}$ and $\Sigma_p=\expect (X-C)^p$, for $p\geq 2$.
  It follows Isserlis' theorem~\cite{isserlis18} that
  \begin{align*}
    (\expect X^2)_{ij}&=\sum_{k}\expect\xi_i \xi_k^2\xi_j =[\expect
    \xi_i\xi_j][\sum_k \expect \xi_k^2] +2\sum_k [\expect
    \xi_{ik}][\expect \xi_{jk}] =\tr(C)C_{ij}+2(C^2)_{ij}, \\
    \Sigma_2&=\expect X^2 -C^2 = \tr(C)C+C^2.
  \end{align*}
  Then, we calculate $\expect X^3$ and $\Sigma_3$ to get the basic
  idea.
  \begin{align*}
    \expect X^3 &=\tr(C)^2C+2\tr(C^2)C+4\tr(C)C^2+8C^3 \preceq
    5\tr(C)(\tr(C)C+2C^2)
    \preceq \frac{3!}{2} B^{3-1}(\tr(C)C+2C^2),\\
    \Sigma_3&=\expect X^3 - \expect XCX - \expect X^2C - \expect CX^2
    + \expect C^2X + \expect CXC + \expect XC^2
    - C^3  \\
    &= \expect X^3 - \expect XCX -2(\tr(C) C^2+C^3) \\
    &= (\tr(C)^2C+2\tr(C^2)C+4\tr(C)C^2+8C^3) - (\tr(C^2)C+2 C^3) -2 (\tr(C) C^2+C^3) \\
    &= \tr(C)^2C+\tr(C^2)C+2\tr(C)C^2+4C^3 \preceq 4
    \tr(C)(\tr(C)C+C^2)
    \preceq \frac{3!}{2} B^{3-1}\Sigma_2, \\
    \expect (C-X)^3 &= -\Sigma_3 \preceq 0 \preceq
    \frac{3!}{2} B^{3-1}\Sigma_2. 
  \end{align*}
  Let $Z_{k,i}=\prod_j Y_{k,i,j}$, where $Y_{k,i,j}$ is $X$ or $C$,
  $k$ is the number of $C$'s in the term between $0$ and $p$, $i$ is
  the index term between $1$ and ${p \choose k}$, and $j$ is between
  $1$ and $p$. Each element of $Y_{k,i,j}$ can be written as
  $\xi_{l_{j-1}}\xi_{l_{j}}$ or $C_{l_{j-1},l_{j}}$, where $l_j$ is
  between $1$ and $d$.  It follows Isserlis' theorem that the
  expectation of each element $\expect Z_{k,i}$ is the sum of the
  product of the expectations of $\xi_l \xi_{l'}$ all
  combinations. For example, in $p=3$, we write $Z_{1,2}=XCX$, then
  \begin{align}
    \expect Z_{1,2} &=
    \begin{pmatrix}
    \expect \sum_{l_1,\cdots,l_{2}} \xi_{l_0} \xi_{l_1} C_{l_1,l_2}
    \xi_{l_2} \xi_{l_3} 
    : l_0, l_3 \in \{1\cdots d\}
    \end{pmatrix}
    \nonumber\\
    &= 
    \begin{pmatrix}
    \sum_{l_1,l_{2}} [\expect(\xi_{l_0}
      \xi_{l_1})C_{l_1,l_2}\expect(\xi_{l_2} \xi_{l_3})
      +\expect(\xi_{l_0} \xi_{l_2})C_{l_1,l_2}\expect(\xi_{l_1}
      \xi_{l_3}) +\expect(\xi_{l_0}
      \xi_{l_3})C_{l_1,l_2}\expect(\xi_{l_1} \xi_{l_2})]
    : l_0, l_3 \in \{1\cdots d\}
    \end{pmatrix}
    \nonumber \\
    &= 
    \begin{pmatrix}
    \sum_{l_1,l_{2}} [C_{l_0,l_1}C_{l_1,l_2}C_{l_2,l_3}
      +C_{l_0,l_2}C_{l_1,l_2}C_{l_1,l_3}
      +C_{l_0,l_3}C_{l_1,l_2}C_{l_1,l_2}
      ]
    : l_0, l_3 \in \{1\cdots d\}
    \end{pmatrix}
    \nonumber \\
    &= [(0 1) (1 2) (2 3)] +[(0 2) (1 2) (1 3)] +[(0 3) (1 2) (1 2)],
    \label{eq:3} \\
    &= [(0 1 2 3)] +[(0 2 1 3)] +[(0 3) (1 2 1)] \label{eq:4} \\
    &= C^3 +C^3 + \tr(C^2) C\label{eq:5}
  \end{align}
  In Eq~(\ref{eq:3}), each $C$ is written a pair, and each product as
  a list.  In Eq~(\ref{eq:4}), pairs are combined into one chain and
  serveral loops. Then in Eq~(\ref{eq:5}), each chain is $C^{c}$,
  where $c$ is the lenth of the chain, and each loop is $\tr(C^l)$,
  where $l$ is the length of the loop.  In general, 
  $\expect Z_{k,i}$ is the sum of terms like $C^{c} \prod_j \tr(C^{l_{j}})$.

  We have $C^c \preceq \tr(C)^{c-2} C^2 \preceq \tr(C)^{c-1}C$, and
  $\tr(C^l)\leq \tr(C)^l$, so we only count the terms with singleton
  chain, i.e. $c=1$, and all terms to bound the expectations with
  $\tr(C)^{p-2}(\tr(C)C+ 2C^2)$ or $\tr(C)^{p-2}(\tr(C)C+
  C^2)$. $\expect Z_{k,i}$ is a expectation of $(2p-2k)$-order
  moments, which yields $(2p-2k-1)!!$ terms. For a given $k$, we have
  ${p\choose k} (2p-2k-1)!!$ terms, assuming $(-1)!!=1$.  A singleton
  chain term must contain $(0,p)$, thus $Z_{k,i}$ must be $X
  (\prod_{j=2}^{p-1} Y_{k,i,j})X$. For a given $k$, the number of
  singleton chain terms is ${p-2 \choose k} (2p-2k-3)!!$.

  For $\expect X^p=\expect Z_{0,1}$ has $(2p-1)!!$ terms, which
  include $(2p-3)!!$ singleton chain terms. The number of singleton
  chain terms is less than a third of the number of all terms when
  $p\geq 2$.  For $p\geq 2$,
  \begin{align*}
    \expect X^p & \preceq \frac{(2p-1)!!}{3} (\tr(C)^{p-1} C
    +2\tr(C)^{p-2}C^2) = \frac{\Gamma(p+1/2)}{3\sqrt{\pi} \Gamma(p+1)}
    p!2^p
    (\tr(C)^{p-1} C +2\tr(C)^{p-2}C^2) \\
    &\preceq \frac{1}{8} p!  2^p (\tr(C)^{p-1} C +2\tr(C)^{p-2}C^2) =
    \frac{p!}{2} B^{p-2} (\tr(C)C +2C^2).
  \end{align*}

  Then $\expect (X-C)^p=\sum_k (-1)^k \sum_i Z_{k,i}$.  The number of
  singleton chain terms is less than half of the number of all
  terms. Thus
  \begin{align*}
    \Sigma_4 &\preceq 10 \tr(C)^{4-1} C + 50\tr(C)^{4-2} C^2 \preceq
    30 \tr(C)^{4-1} C + 30\tr(C)^{4-2} C^2
    \preceq \frac{4!}{2} B^{4-2}\Sigma_2, \\
    \expect (C-X)^4 &= \Sigma_4
    \preceq \frac{4!}{2} B^{4-2}\Sigma_2. \\
  \end{align*}
  When $p\geq 5$,
  \begin{align*}
    \Sigma_p &\preceq \expect X^p + C^p \preceq
    \frac{(2p-1)!!+1}{2}(\tr(C)^{p-1} C+\tr(C)^{p-2} C^2) \\
    &=(\frac{\Gamma(p+1/2)}{2\sqrt{\pi} \Gamma(p+1)} +
    \frac{1}{p!2^{p+1}})
    p! 2^p \tr(C)^{p-2}(\tr(C)C +C^2)\\
    &\preceq 0.1232 \times p! 2^p \tr(C)^{p-2}(\tr(C)C +C^2) 
    \preceq
    \frac{p!}{2} B^{p-2} \Sigma_2,\\
    \expect (C-X)^p &
    \preceq \expect X^p + C^p \preceq
    \frac{p!}{2} B^{p-2} \Sigma_2.\\
  \end{align*}
\end{proof}

Now we prove  Theorem~\ref{thm:1}.
\begin{proof}[Proof of Theorem~\ref{thm:1}]
  Let $X_k=\xi_k\trans{\xi_k}-C$. We have $\expect X_k=0$,
  $\lambda_1(\Sigma_2)\leq (r+1)\lambda_1(C)^2$, and $B=
  2r\lambda_1(C)$. Then Eq~(\ref{eq:15}) follows Lemma~\ref{lem:3} and
  Theorem~\ref{thm:m}. Similarly, letting $X_k=C-\xi_k\trans{\xi_k}$,
  we prove Eq~(\ref{eq:16}).  Combining them and $\|C\|=\lambda_1(C)$,
  we have Eq~(\ref{eq:17}). Plugging $\lambda_1(C)=\kappa_\ell
  \lambda_\ell(C)$, Eq~(\ref{eq:18}) follows Weyl's theorem on
  eigenvalues, specifically,
  \begin{align*}
    \lambda_\ell(\frac{1}{n}S) &\leq \lambda_\ell(C)+
    \lambda_1(\frac{1}{n}S-C), \\
    \lambda_\ell(C) &\leq \lambda_\ell(\frac{1}{n}S) +
    \lambda_1(C-\frac{1}{n}S).
  \end{align*}
\end{proof}
\bibliographystyle{mlapa}
\bibliography{zsh}

\end{document}